\title{Optimal routing and transmission strategies for UAV reconnaissance missions with detection threats}
\author{Riley Badenbroek\footnote{Erasmus University Rotterdam} \and Relinde Jurrius \footnote{Netherlands Defence Academy} \and Lander Verlinde \footnote{School of Computer Science, University of Auckland, New Zealand. Email: \texttt{lver263@aucklanduni.ac.nz}.~Research supported by the New Zealand Marsden Fund.}}
\date{}
\pgfplotsset{compat=1.18}
\newtheorem{theorem}{Theorem}
\newtheorem{proposition}[theorem]{Proposition}
\newcommand{\vertices}{V}
\newcommand{\edges}{E}
\newcommand{\timeperiods}{T}
\newcommand{\timemax}{T_{\text{max}}}
\newcommand{\survmove}{s^{\text{M}}}
\newcommand{\survsend}{s^{\text{S}}}
\newcommand{\expmove}{v^{\text{M}}}
\newcommand{\expobs}{v^{\text{O}}}
\newcommand{\expsend}{v^{\text{S}}}
\newcommand{\base}{0}
\newcommand{\varmove}{x}
\newcommand{\varobs}{y}
\newcommand{\varsend}{z}
\newcommand{\probmove}{q}
\newcommand{\probsend}{p}
\newcommand{\valinfo}{w}
\newcommand{\survsendvarmove}{\alpha}
\newcommand{\survmovevarsend}{\beta}
\newcommand{\expsendvarmove}{\gamma}
\newcommand{\survmovevarobs}{\delta}
\newcommand{\expobsvarsend}{\epsilon}
\newcommand{\bigM}{M}
\begin{document}

\maketitle

\begin{abstract}
We consider an autonomous reconnaissance mission where an Unmanned Aerial Vehicle (UAV) has to visit several points of interest and communicate the intel back to the base. At every point of interest, the UAV has the option to either send back all available info, or continue to the next point of interest and communicate at a later stage. Both choices have a chance of detection, meaning the mission fails. We wish to maximize the expected amount of information gathered by the mission. This is modelled by a routing problem in a weighted graph. We show that the problem is NP-complete, discuss an ILP formulation, and use a genetic algorithm to find good solutions for up to ten points of interest.\\ 

\noindent Keywords: linear programming, genetic programming, graph theory, mission planning, vehicle routing

\noindent MSC: 90C90, 90C59
\end{abstract}



\section{Introduction}

The role of Unmanned Aerial Vehicles (UAVs), more commonly known as drones, in society continues to become more significant every day. The civil market alone is estimated to be worth 7.2 billion USD in 2022 and this value is expected to grow to 19.2 billion USD in 2031 \cite{DroneMarket}.
Applications range from agriculture over disaster response to package deliveries. But also its military use has become more relevant. Already in the Vietnam War, the US army deployed drones as a weapon \cite{Vietnam}. When surveillance technology improved, it became clear that drones could also be used for survey missions in enemy terrain \cite{Marine}. Moreover, these automated missions need not be performed by aerial systems, depending on the terrain and the characteristic of the mission, an automated ground vehicle (UGV) or an unmanned under water vehicle (UUV) may be more adept. 
The wars in Ukraine and Gaza have once again shown the importance of hybrid warfare \cite{HybridUkraine}. In such warfare, the distinction between different modes of warfare (conventional warfare, cyber warfare, political warfare) tend to blurred. This makes the adversary in such war fluid and harder to predict \cite{HybridConcept}. Intelligence in physical and non-physical infrastructure is key in gaining advantage. Hence the extent to which unmanned vehicles are used for both offensive as well as reconnaissance missions is at an all-time high \cite{DronesWarfare}.

To expand the number of operational systems while managing costs, it is desirable to deploy systems that can operate fully independently. For a reconnaissance mission, this requires a planning of the complete mission before the drone leaves for enemy territory. This is a so-called \textit{Single-Hop Routing Protocol}, as the the planning is updated in only one `hop' \cite{jiang2018routing}. The setting of such a mission can be stated as follows: starting from a secure base, multiple surveillance locations need to be safely reached and the acquired information has to be brought back to the base. There are three possible ways of bringing back information. At every surveillance location, there is the possibility of transmitting information back to the base camp. Hence the first option is that the drone travels to a surveillance location, gathers the info and immediately sends it back to the base. Secondly, the drone could also store the information and go to the next surveillance location. After having obtained the information there, the totality of info could then be transmitted together at that location. Thirdly, the drone could also store information and return to the base. In this case the information is physically obtained back from the drone. However, each action in the mission carries the risk of detection – the drone could be spotted during flight, or transmissions might be intercepted. Both ways of detection give away the position of the drone, ending the mission abruptly.

This paper investigates how to find the optimal strategy of these reconnaissance missions. Such an optimal strategy consists of two elements: both the route and the send strategy have to be optimal to maximize the amount of retrieved information. Hence the specific questions for which an answer is proposed in this research are:

\begin{itemize}
    \item In which order should the different locations be travelled to?
    \item Where is it beneficial to make a transmission, and where is it better to hold on to the gathered information?
\end{itemize}

\subsection{Related Work}
Mission planning for UAV reconnaissance has been studied extensively in literature, as there are various factors to take into account. These factors can be subdivided into three categories: \textit{modelling of means}, \textit{modelling of terrain} and \textit{modelling of threat} \cite{song2019survey}. Every such category yields a different objective in the optimisation and a different way of approaching the problem. 

\textit{Modelling of means} encompasses all specific and intrinsic vehicle characteristics. A UAV has different features than a UGV, and hence needs to be modelled differently. Evidently, UAVs are able to move up and down, giving an additional dimension to the routing problem \cite{song2019survey}. Furthermore, a small UAV could require a recharge during the mission, maybe even on a mobile charging station. This scenario was studied and modelled by Yu et al. \cite{yu2019algorithms}. But also other types of missions combine both types of unmanned vehicles; see Lu et al. \cite{lu2023uav} for a review. Modelling of means also comprises of the specific goals besides surveillance of the mission. Sajid et al. \cite{sajid2022routing} propose a model for missions which also contain a delivery component.

Secondly, \textit{modelling of terrain} comprises of all terrain elements the vehicle can encounter during the mission. Especially UGVs are constrained by the terrain. Roberge, Tarbouchi and Labonté \cite{roberge2012comparison} 
have divided the terrain into small square cells which represent areas of similar altitude. They then study the performance of genetic algorithms for routing across such a grid, taking into account fuel constraints coming from uphill and downhill movements. Fink et al. \cite{fink2019globally} have proposed an adaptation of Dijkstra's algorithm for multi-objective routing in three-dimensional, mountainous terrain. Interestingly, they do so for routing a rover on Mars. But also when considering UAVs, the terrain plays a role. There might be altitude constraints, or constraints related to the angle of ascent and descent, especially when multiple UAVs are involved and collisions should be avoided; see Jia et al. \cite{jia2020dynamic} for an approach of this scenario. Furthermore, Albert, Leira and Imsland \cite{albert2017UAV} considered the problem where multiple UAVs surveil moving targets. They propose an Mixed Integer Linear Program to model this situation and do a case study based on moving icebergs. 

Lastly, \textit{modelling of threats} includes all elements with an incentive to jeopardize the mission, e.g. radars and enemy lookout posts. Alotaibi et al. \cite{alotaibi2018unmanned} model these threats as a cost function on the edges of a graph based on the distance to hostile waypoints. Dasdemir, Köksalan and Öztürk \cite{dasdemir2020flexible} consider a continuous terrain with circular radar zones. Another possible threat is jamming, making it impossible to send any information back to the basis. Han et al. \cite{han2021satellite} propose a reinforcement learning approach to deal with an uncertain jamming state when routing. There are multiple ways to deal with threats -- proactive or reactive -- which require different algorithms; see also Quadt et al. \cite{quadt2024dealing} for a recent and extensive survey.

\subsection{Our Contribution}
The main contribution of this paper is considering a novel characteristic in the model of means which implies a new threat to a reconnaissance mission. In a classical setting, it is assumed that the unmanned vehicle physically brings gathered information back to the basis, or the risk of detection when transmitting this info is not considered. In this work, we consider what happens when we do take into account the ability of transmission and the threat of interception. Hence, we assume that, based on the vehicle characteristics, modelling of the terrain, and similar techniques as in Alotaibi et al. \cite{alotaibi2018unmanned}, we have been given a graph with a cost function on the edges corresponding to the survival probability to cross that edge. But now we also add a cost function to the vertices, corresponding to surviving a potential transmission at that location. This added threat requires a new model and new solutions, which are presented below.

The paper is structured in five sections. After this introduction, the problem is described mathematically. A model based on weighted graphs is proposed and two ways to compute the expected value of retrieved information are discussed. Furthermore, it is investigated whether the problem can be written as an Integer Program and a motivation is given for the choice of a heuristic algorithm. In the third section, the case where one drone is deployed during the mission is examined in great detail. A genetic program that yields the optimal strategy is presented. This genetic program is tested on several mission scenarios and different algorithms are compared in terms of success rate and complexity. In the last section the scope is extended to missions with multiple drones. The best genetic program from the single drone scenario is adapted and improved such that it can also solve the multiple drone scenario. This improved algorithm is tested in the same way as the single drone scenario and a comparison is made.

\section{Mathematical exploration of the problem}

The objective for a planning of an autonomous reconnaissance mission is to maximize the expected value of the transmitted information. This section investigates how to model such a mission mathematically and how to compute the expected value. 

A mission on $n$ surveillance locations can be naturally formulated as a problem on a weighted graph $G= (V, E)$, with $V = \{0, \dots, n-1\}$. The weights consist of both edge weights and vertex weights. 

Every edge $\{i,j\}$ has weight $q_{ij} \in [0,1]$. These edge weights correspond to the crossing probabilities: the survival chance of crossing from $i$ to $j$. Moreover, every vertex $i$ has two weights. The first one $p_i \in [0,1]$ is the transmission probability: the probability of making a successful transmission at vertex $i$. The second weight $w_i$ is the amount of information that can be gathered at that vertex. Having these weights and probabilities, finding the optimal strategy of the mission, boils down to finding a walk with corresponding send strategy that maximizes the expected amount of retrieved information. In such a strategy, the following rules apply:
\begin{itemize}
    \item Vertex 0 corresponds to a safe base camp: successful transmission has probability one, but there is no information to retrieve here. This means that $p_0$ is equal to 1 and $w_0$ is 0.
    \item The walk can have repeated vertices. However, once the information has been retrieved at a vertex $i$, $w_i$ is set to 0. 
    \item If information is retrieved but not transmitted, it is carried to the next vertex in the walk and can be transmitted there or further down the walk. A transmission always sends all information that has been retrieved but not yet transmitted. At the last vertex of the walk -- the base camp -- a transmission is always made, but this transmission can be empty.
    \item Once a crossing or a transmission fails, the mission is over and no more information can be retrieved nor transmitted.
\end{itemize}

Following the above rules, we formulate the expression for the expected value of transmitted information. By considering the different vertices where information is transmitted, we compute how much information every transmission is expected to contain. Let $R = \left[v_1, v_2, \dots, v_k \right] $ with $v_1 = v_k=0$ be the sequence of vertices that make up the walk. Say $v_i$ has transmission probability $p_{v_i}$ and the probability of crossing from $v_i$ to $v_{i+1}$ is $q_{v_iv_{i+1}}$. Moreover, let $S = \left[v_{s_0}, v_{s_1}, v_{s_2}, \dots, v_{s_m}\right]$ be the subsequence of $R$ consisting of $v_{s_0} = v_1$, followed by the $m$ vertices where information is transmitted. Then we compute the probability to survive the entire route with corresponding send strategy with

\[ \mathbb{P}(\text{survival}) =  \prod_{j=1}^{k-1} q_{v_jv_{j+1}} \prod_{i=1}^{m} p_{s_i} .\]

And if we let $X$ denote the random variable of the amount of well-received information, then the total expected information is given by

\[ \mathbb{E}[X] =\sum_{i=1}^{m} \prod_{j=0}^{s_i -1} q_{v_j v_{j+1}} \prod_{ \substack{u: v_{s_u} \in S \\ u \leq i}} p_{s_u} \sum_{ s_{i-1} < h \leq s_i} w_h. \]

\subsection{An example of a reconnaissance mission}\label{subsec: first example}
To get a better insight in the above formula and in to why this problem is harder than initially expected, it is useful to look at an example. Consider a mission which is given by the graph in \autoref{fig: example graph}. This graph consists of a base camp at vertex 0 and three surveillance locations at vertices 1, 2 and 3. The crossing and transmission probabilities are shown on the graph. At every surveillance location there is one unit of information to be retrieved, so $w_i=1$ for all $i$.
Now let's assume that the following strategy is chosen:
    \begin{align*}
    \text{Route: } &(0,1,2,3,0) \\
    \text{Send: }  &(0,1,0,0,1).
    \end{align*}
This send strategy means that a transmission is made at vertex 1 and back in the base camp upon return.

Let $X$ be the random variable for the amount of well-received information for this route and send strategy. $X_1$ is the random variable that defines the amount of received information at the first transmission and $X_2$ at the second one.
By linearity of expectation: $\mathbb{E}[X] = \mathbb{E}[X_1]+ \mathbb{E}[X_2]$. 
To compute $\mathbb{E}[X_1]$, first the probability of safely reaching vertex 1 needs to be computed, which is equal to 0.6. The transmission probability is 0.9 and there is one unit of information being transmitted. So:
$$ \mathbb{E}[X_1] = 0.6\cdot0.9\cdot1 = 0.54. $$
We make a similar computation for the second transmission. The probability of reaching the base camp is $0.6\cdot0.9\cdot0.3\cdot0.9\cdot0.6$. The transmission probability is 1 and there are two units of information transmitted: the information from vertices 2 and 3. The expected amount of transmitted information with the second transmission then becomes:
$$\mathbb{E}[X_2] = 0.6\cdot0.9\cdot0.3\cdot0.9\cdot0.6\cdot1\cdot2 = 0.17496.$$

Combining this yields the total expected value:
$$\mathbb{E}[X] = 0.54 + 0.71496 = 0.71496.$$

So for this graph, if the route $(0,1,2,3,0)$ is chosen with transmissions at vertices 1 and 0, the mission is expected to retrieve 0.71496 units of information out of the possible 3 units. In fact, this is the best strategy using a Hamilton cycle in the graph. This might be a bit disappointing when setting up a mission, as we don't even expect a third of all possible information to be retrieved with this strategy. Fortunately, we are able to find an optimal strategy with a higher expected value: 
    \begin{align*}
    \text{Route: } &(0,2,3,2,0,1,0) \\
    \text{Send: }  &(0,0,0,0,1,1,1)
    \end{align*}
has an expected value of 1.666, more than double of the best Hamilton cycle. This shows that the best strategy is not necessarily a `pretty' route that is easy to predict. That is part why this problem is hard to solve. In the first place, the route of the optimal strategy is not always intuitive: it doesn't necessarily have the nice structure of a Hamilton cycle, nor is it necessarily going back and forth between base camp and other vertices. This is due to the fact that our mission graphs are not necessarily metric. Traveling between two connected  vertices $u$ and $v$ might have a higher survival probability by going around via some other vertices instead of via the edge $\{u,v\}$. And secondly, the expected value is dependent on both the route and the send strategy. This means that we cannot first optimise the route and then find the optimal send strategy corresponding to this route -- this would have been possible using a dynamic program. This complicates things when looking for an algorithm to solve this problem.

   \begin{figure}[ht]
        \centering
        \begin{tikzpicture}[scale=2]
        	\node[draw,circle] (0) at (0,-1.5) {0};
        	\node[draw,circle] (1) at (-2,1) {1};
        	\node[draw,circle] (2) at (2,1) {2};
        	\node[draw,circle] (3) at (0,0) {3};
        	
        	\node[above=0.4] at (1) {0.9};
        	\node[above=0.4] at (2) {0.5};
        	\node[above=0.4] at (3) {0.1};
        	
        	\draw (0) -- (1) node[midway, below left] {0.6};
        	\draw (0) -- (2) node[midway, below right] {0.9};
        	\draw (0) -- (3) node[midway, right] {0.6};
        	\draw (1) -- (2) node[midway, above] {0.3};
        	\draw (1) -- (3) node[midway, above right] {0.1};
        	\draw (2) -- (3) node[midway, above left] {0.9};
        \end{tikzpicture}
        \caption{Example of a graph with send and crossing probabilities.}
        \label{fig: example graph}
    \end{figure}
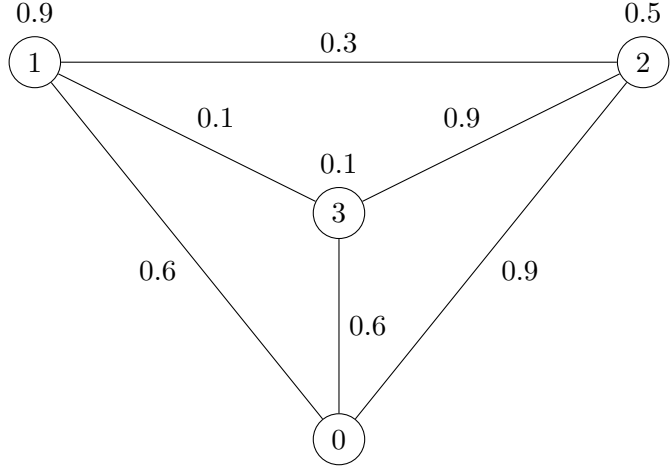

\subsection{NP-completeness}
Consider the following decision version of the reconnaissance problem above:

\begin{quote}
	Given an undirected connected graph $G = (\vertices, \edges)$, crossing probability $\probmove_{ij} \in [0,1]$ for every edge $\{i,j\} \in \edges$, transmission probability $\probsend_i \in [0,1]$ for every vertex $i \in \vertices$, a weight $\valinfo_i \geq 0$ that indicates the value of the information at $i \in \vertices$, and a real $r \in \mathbb{R}$, determine if there is a reconnaissance plan whose expected value is at least $r$.
\end{quote}

We will show that this decision problem is NP-complete. To this end, we first bound the length of a certificate for a `yes'-instance of this decision problem. This length depends primarily on the number of time periods the drone needs to travel in an optimal solution. 

\begin{theorem}
	The reconnaissance problem on the undirected connected graph $G = (\vertices, \edges)$ has an optimal solution consisting of at most $|\vertices|^2-1$ time periods.
	\label{prop:PlanLength}
\end{theorem}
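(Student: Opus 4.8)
The plan is to start from an optimal reconnaissance plan and repeatedly apply local surgeries on the walk that never lower the expected yield $\mathbb{E}[X]$ while making the walk shorter; among all optimal plans we may then fix one whose walk is as short as possible (and, subject to that, uses as few transmissions as possible) and bound its length. Two moves carry the argument: discarding a transmission that sends no information, and cutting out a closed portion of the walk that gathers nothing new.

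First I would bound the number of transmissions. If a transmission sends no information, its summand in the formula for $\mathbb{E}[X]$ is zero while its factor $\probsend \le 1$ only shrinks every later summand, so we may assume every transmission except the compulsory one at the base sends positive weight; since the vertex sets visited for the first time between consecutive transmissions are pairwise disjoint and each nonempty one contains some $i$ with $\valinfo_i > 0$, there are at most $|\vertices| - 1$ such transmissions, hence at most $|\vertices|$ transmissions in all, cutting the walk into at most $|\vertices|$ phases.

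Next, suppose the walk is at a vertex $v$ at two times $t_1 < t_2$ with no transmission and no first-ever visit to a positive-weight vertex strictly in between. Deleting the closed sub-walk from $t_1$ to $t_2$ yields a legal walk that gathers exactly the same positive weight, transmits at exactly the same vertices with the same contents, but whose survival probability up to each transmission is a product over fewer factors, all in $[0,1]$, hence no smaller; so $\mathbb{E}[X]$ does not decrease while the walk has become strictly shorter, contradicting minimality. Hence, calling each time a transmission occurs or a positive-weight vertex is first visited an \emph{event}, the walk repeats no vertex strictly between consecutive events, so every stretch between two successive events is a simple path with at most $|\vertices| - 1$ edges.

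The step I expect to be the real obstacle is turning this structure into the precise bound $|\vertices|^2 - 1$. The straightforward decomposition into inter-event stretches only gives a bound of order $|\vertices|^2$ with a constant near $2$ — roughly $|\vertices|$ transmissions plus $|\vertices|$ first visits, each stretch of length up to $|\vertices| - 1$ — and getting the constant down to exactly $|\vertices|^2 - 1$ requires a more economical argument, most cleanly by showing directly that some optimal plan visits each vertex at most $|\vertices|$ times, since the walk then has at most $|\vertices|^2$ vertices and hence at most $|\vertices|^2 - 1$ edges. Checking that the two surgeries above leave $\mathbb{E}[X]$ non-decreasing, by contrast, is routine once the explicit formula for $\mathbb{E}[X]$ is tracked through them.
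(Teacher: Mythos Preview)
Your two surgeries and the event decomposition are exactly the paper's moves, and your verification that they do not decrease $\mathbb{E}[X]$ is correct. The gap is precisely the one you flag: your counting stalls at roughly $2|\vertices|^2$, and the fix you propose (each vertex visited at most $|\vertices|$ times) does not follow from what you have established, since your event count is about $2|\vertices|$ and each inter-event stretch can revisit a given vertex once.

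The missing refinement is that the length bound on each inter-event stretch should depend on how many vertices have been visited \emph{so far}, not on $|\vertices|$. Between the $k$-th and $(k+1)$-th first observation, the drone moves only among the $k$ already-visited vertices (any step outside is itself a new observation), so each of your simple paths in that phase has at most $k-1$ edges, not $|\vertices|-1$. Since you already showed there is at most one useful transmission per such phase, the whole phase costs at most $(k-1)+(k-1)+1=2k-1$ steps; summing over $k=1,\dots,|\vertices|-1$ gives $(|\vertices|-1)^2$. After the last observation the drone needs at most $|\vertices|-1$ steps to a final transmission location and at most $|\vertices|-1$ steps back to base, for a total of $(|\vertices|-1)^2+2(|\vertices|-1)=|\vertices|^2-1$. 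This single sharpening of the per-stretch bound from $|\vertices|-1$ to $k-1$ is all that is missing from your argument.
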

\begin{proof}
	We restrict ourselves to feasible solutions where the drone does not travel in a cycle without transmitting or observing information. Removing such a cycle from the drone's walk keeps the plan feasible, and can never decrease the expected value of the transmitted information. This restriction makes the number of feasible solutions under consideration finite. It follows that this restricted problem has an optimal solution. This is also an optimal solution to the unrestricted problem, since adding cycles without transmitting or observing information cannot improve the objective value.
	
	Now let $n = |\vertices|$, and pick $\base \in \vertices$. Suppose that at some point in the reconnaissance plan, the drone has visited $k \in \{1, \dots, n\}$ unique vertices. (At the very start of the reconnaissance plan, $k = 1$, because the drone has only visited the base.) 
	
	To maximize the number of time periods before a new vertex is observed, the drone could first move to a location from where it transmits information. The drone can spend at most $k-1$ time periods traveling between the already visited vertices -- unless it travels in a cycle without transmitting or observing.
	
	After transmitting information, the drone can again spend at most $k-1$ time periods traveling between the already visited vertices before it makes a cycle without transmitting or observing a new vertex. Hence, there is an optimal solution with at most $2k-1$ time periods between the observations of the $k$-th and $(k+1)$-th unique vertex. In such an optimal solution, the number of time periods until all vertices are observed is at most $\sum_{k=1}^{n-1} (2k-1) = (n-1)^2$.
	
	Once all vertices are observed, the drone can take at most $n-1$ time periods to move to a transmission location, and at most another $n-1$ time periods to move back to the base. In sum, there is an optimal solution consisting of at most $(n-1)^2 + 2(n-1) = n^2-1$ time periods.
\end{proof}

It follows that the length of a certificate for a `yes'-instance of the decision problem is bounded by a polynomial of its input size. In other words, the reconnaissance problem lies in NP.

In fact, there are problem instances where any optimal solution consists of $|\vertices|^2 - O(|\vertices|)$ time periods, which shows that the dominant term in the bound of Theorem~\ref{prop:PlanLength} is tight.
\begin{proposition}
    The dominant term in the bound in Theorem~\ref{prop:PlanLength} is tight.
\end{proposition}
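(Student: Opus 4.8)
The plan is to exhibit a family of instances that forces the drone to shuttle between the base and the frontier of already-visited vertices once per newly observed vertex. Take $G_n$ to be the path $0-1-\cdots-(n-1)$; give every edge crossing probability $q=\tfrac12$; set the transmission probability to $0$ at every vertex other than the base (recall $p_0=1$ is forced); and give unit information weight to each of $1,\dots,n-1$. I claim the optimal reconnaissance plan is, for $k=1,2,\dots,n-1$ in turn, to walk from $0$ out to vertex $k$ and straight back, transmitting at $0$ after each loop. This uses $\sum_{k=1}^{n-1}2k=n^2-n$ time periods, and every optimal plan uses at least that many, so the dominant term $|V|^2$ of the bound in Theorem~\ref{prop:PlanLength} is tight.

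First I would cut down the search space. A transmission anywhere but the base succeeds with probability $0$ and aborts the mission, so an optimal plan transmits only at the base and is therefore a concatenation of base-to-base excursions each closed by a transmission. On a path an excursion reaching maximum depth $d$ costs at least $2d$ time periods, with equality only for the straight out-and-back, so we may assume each excursion has that shape. An excursion makes transmittable exactly the hitherto-uncollected vertices up to its depth, so an excursion whose depth does not exceed the current maximum collects nothing and may be deleted without lowering the expected value; hence we may restrict to plans described by strictly increasing depths $1\le d_1<\cdots<d_L$, and since appending an excursion to depth $n-1$ strictly increases the expected value whenever $d_L<n-1$, optimal plans of this form have $d_L=n-1$. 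Writing $\sigma_j:=d_1+\cdots+d_j$ and $d_0:=0$, the expected value of such a plan equals $\sum_{j=1}^{L}(d_j-d_{j-1})\,q^{2\sigma_j}$.

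The crux is that this quantity is maximized uniquely by $L=n-1$ and $d_j=j$. Given any other feasible sequence, let $j$ be the least index with $d_j>j$ (so $d_i=i$ for $i<j$ and $d_j\ge j+1$), and insert a new excursion of depth $j$ immediately before excursion $j$. Putting $t=q^2$, $g=d_j-j\ge 1$, $P=t^{\sigma_{j-1}}$ and $D=\sum_{l>j}(d_l-d_{l-1})t^{\sigma_l}$, a direct computation shows the change in expected value is $Pt^{j}\bigl(1+g\,t^{j+g}-(g+1)t^{g}\bigr)-(1-t^{j})D$. One checks the parenthesized factor exceeds $1/3$ whenever $t\le 1/3$; and, crucially, accounting for $D$ vertex by vertex rather than excursion by excursion — the excursion that first reaches a vertex $v>d_j$ necessarily already has $\sigma\ge \sigma_{j-1}+d_j+v$ at that point — yields $D\le Pt^{d_j}\sum_{v>d_j}t^{v}\le Pt^{2d_j+1}/(1-t)$, with no dependence on $n$. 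Since $d_j\ge j+1$ forces $2d_j+1-j\ge 4$, the inequality $t^{4}<(1-t)/3$, valid for every $t\le 1/3$, makes the change strictly positive. Hence every plan other than $(1,2,\dots,n-1)$ is strictly improvable, so $(1,2,\dots,n-1)$ is the unique optimizer; consequently every optimal plan contains excursions to depths $1,2,\dots,n-1$ and uses at least $n^2-n=|V|^2-O(|V|)$ time periods.

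I expect the estimate of $D$ to be the main obstacle: the naive bound, summing the weights collected by the later excursions, loses a factor of $|V|$ and only settles the claim for small graphs, so the vertex-by-vertex accounting — which turns $D$ into a geometric series in $t$ — is essential. A secondary point to get right is the reduction to strictly increasing excursion depths, namely that deleting a ``null'' excursion (and, as in the proof of Theorem~\ref{prop:PlanLength}, a cycle that transmits and observes nothing) never decreases the expected value, so that the final conclusion about the length of every optimal plan is fully justified.
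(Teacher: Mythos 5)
Your construction and argument are correct, but they follow a genuinely different route from the paper. The paper also uses a path with base at one end, unit weights, and transmissions possible only at the base, but it chooses the crossing probability to be $1/\sqrt{n}$, i.e.\ dependent on the instance size. With that choice a one-step comparison suffices: from the current frontier vertex $\ell$, going straight back and transmitting yields at least $\sigma_\ell\, n^{-\ell/2}$, while any strategy that pushes forward first can collect at most $n-1-\ell$ further units, each discounted by at least two extra crossings, giving at most $(n-\ell)\,\sigma_\ell\, n^{-(\ell+2)/2} < \sigma_\ell\, n^{-\ell/2}$; a short induction then forces the shuttle behaviour and the $\sum_{k=1}^{n-1}2k = n^2-n$ count. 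You instead keep the crossing probability a fixed constant ($q=\tfrac12$, or any $t=q^2\le \tfrac13$) and compensate with more work: you reduce to canonical plans given by strictly increasing excursion depths $d_1<\dots<d_L=n-1$ with value $\sum_j (d_j-d_{j-1})t^{\sigma_j}$, and run an insertion/exchange argument whose key estimate is the vertex-by-vertex bound $D \le P\,t^{2d_j+1}/(1-t)$; I checked the change formula $P t^{j}\bigl(1+g\,t^{j+g}-(g+1)t^{g}\bigr)-(1-t^{j})D$ and the inequalities $(g+1)t^g\le 2t\le \tfrac23$ and $t^4<(1-t)/3$, and they hold, so the exchange is strictly improving and the unique canonical optimum is $(1,2,\dots,n-1)$. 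The trade-off: the paper's $n$-dependent probability makes the ``go back versus go forward'' comparison trivial and the proof very short, whereas your argument is longer but shows the quadratic lower bound already for instances whose probabilities are bounded away from $0$ and $1$ independently of $n$, which is a mildly stronger statement. Do make sure the routine cleanup you flag is spelled out in a final write-up (transmitting at every base visit is weakly better, deleting null or wandering excursions never decreases the value, and an arbitrary optimal plan is at least as long as its cleaned canonical form), since that is what converts uniqueness of the canonical optimizer into the claim that \emph{every} optimal plan needs at least $n^2-n$ time periods.
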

\begin{proof}
    We will construct a reconnaissance problem instance and lower bound the number of time periods in any optimal solution.

    Let $G = (\vertices, \edges)$ be a path graph such that $\base \in \vertices$ is an end point of the path, and let $n = |\vertices|$. Assume the non-base vertices are labeled such that the path is $(\base, 1, 2, \dots, n-1)$. Define the crossing probabilities $\probmove_{ij} = 1/\sqrt{n}$ for all $\{i,j\} \in \edges$, and transmission probabilities $\probsend_i = \mathbbm{1}_{i = \base}$ and information values $\valinfo_i = \mathbbm{1}_{i \neq \base}$ for all $i \in \vertices$.
    
    We will show that any optimal solution to this reconnaissance problem instance consists of at least $n^2 - n$ time periods.
 
	Let $\sigma_k$ be the probability of surviving until vertex $k \geq 1$ is visited for the first time, and let $\tau_k$ be the probability of surviving until the first base visit after the first visit to vertex $k$. The objective is then to maximize $\sum_{k=1}^{n-1} \tau_k$.
	
	As an induction hypothesis, suppose the drone does not carry any non-transmitted information when it visits a vertex $\ell \geq 1$ for the first time. At this point, $\sigma_k$ is fixed for all $k \leq \ell$ and $\tau_k$ is fixed for all $k \leq \ell - 1$.
	After observing the information at vertex $\ell$, the drone can either move back towards the base, or move forward.
	
	If the drone moves back to vertex $\ell-1$, this can only be optimal if it is part of a movement back to the base in $\ell$ time periods. If the drone does not move back to the base, or does so in more than $\ell$ time periods, the expected transmission value is decreased unnecessarily, since the crossing probabilities are smaller than one. 
	As a result of this strategy, the objective value would satisfy
	\begin{equation}
		\label{eq:MoveBackExpectedValue}
		\sum_{k=1}^{n-1} \tau_k \geq \sum_{k=1}^{\ell} \tau_k = \sum_{k=1}^{\ell-1} \tau_k + \sigma_\ell \left( \frac{1}{\sqrt{n}} \right)^\ell.
	\end{equation}
	
	If the drone moves forward to vertex $\ell+1$, it observes more information. It can gather at most $n-1-\ell$ units of information from the unvisited vertices with an index greater than $\ell$. To send this information, the drone has to travel back to the base, which means crossing at least $\ell+1$ edges. As a result of this strategy, the objective value would satisfy
	\begin{equation}
		\label{eq:MoveForwardExpectedValue}
		\sum_{k=1}^{n-1} \tau_k = \sum_{k=1}^{\ell-1} \tau_k + \sum_{k=\ell}^{n-1} \tau_k \leq \sum_{k=1}^{\ell-1} \tau_k + \sum_{k=\ell}^{n-1} \sigma_\ell \left( \frac{1}{\sqrt{n}} \right)^{1+\ell+1}.
	\end{equation}
	
	Since the lower bound from \eqref{eq:MoveBackExpectedValue} is strictly greater than the upper bound from \eqref{eq:MoveForwardExpectedValue} for any $\ell \geq 1$, we conclude that it is optimal for the drone to move back toward the base and transmit after observing a new piece of information. It then does not carry any non-transmitted information when arriving at vertex $\ell+1$, as we assumed. The total number of time periods required to visit all locations following this strategy is $\sum_{k=1}^{n-1} 2k = n^2 - n$.
\end{proof}

Having shown that the reconnaissance problem lies in NP, we now move on to showing it is NP-hard. To this end, we will provide a reduction from the Hamiltonian path problem with a fixed starting point. The following proposition shows that this problem is itself NP-complete.

\begin{proposition}
	Let $G = (\vertices, \edges)$ be an undirected graph such that $\base \in \vertices$. Then, the problem of deciding if $G$ contains a Hamiltonian path with starting point $\base$ is NP-complete.
\end{proposition}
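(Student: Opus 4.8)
The plan is to establish membership in NP and NP-hardness separately. Membership is immediate: a purported Hamiltonian path starting at $\base$ is a sequence of $|\vertices|$ vertices, and in polynomial time one checks that it is a permutation of $\vertices$, that its first entry is $\base$, and that every consecutive pair is an edge of $G$. So the problem lies in NP.

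For hardness I would reduce from the ordinary Hamiltonian path problem -- deciding whether a graph contains a Hamiltonian path with no constraint on its endpoints -- which is a classical NP-complete problem. Given an instance $H = (\vertices_H, \edges_H)$ of that problem, construct $G = (\vertices, \edges)$ by adjoining a fresh vertex $\base$ adjacent to every vertex of $H$, that is, $\vertices = \vertices_H \cup \{\base\}$ and $\edges = \edges_H \cup \{\, \{\base, v\} : v \in \vertices_H \,\}$. This construction is clearly computable in polynomial time.

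The crux is the correctness equivalence: $H$ has a Hamiltonian path if and only if $G$ has a Hamiltonian path starting at $\base$. For the forward direction, given a Hamiltonian path $(v_1, \dots, v_m)$ of $H$, the sequence $(\base, v_1, \dots, v_m)$ is a Hamiltonian path of $G$ starting at $\base$, since the edge $\{\base, v_1\}$ exists by construction and all other edges are inherited from $H$. For the converse, let $(\base, u_1, \dots, u_m)$ be a Hamiltonian path of $G$ starting at $\base$; then $u_1, \dots, u_m$ enumerate the vertices of $\vertices_H$ exactly once, and each consecutive pair $\{u_i, u_{i+1}\}$ is an edge of $G$ not incident to $\base$, hence an edge of $H$, so $(u_1, \dots, u_m)$ is a Hamiltonian path of $H$.

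I do not expect a real obstacle here; the only point needing a word of care is confirming that the added vertex $\base$ cannot appear as an interior vertex of the path in $G$ and thereby create spurious solutions -- but this is automatic, since a Hamiltonian path visits each vertex once and $\base$ is pinned as the starting vertex. Combining the NP membership argument with this polynomial-time reduction yields NP-completeness.
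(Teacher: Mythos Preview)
Your argument is correct. The NP membership part is handled just as the paper does (the paper simply asserts it in one sentence), and your reduction is valid: adding a universal fresh vertex $\base$ to an arbitrary instance $H$ of the unrestricted Hamiltonian path problem and asking for a Hamiltonian path in $G$ starting at $\base$ is clearly equivalent to asking whether $H$ has any Hamiltonian path.

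The paper takes a slightly different route. Instead of reducing from the unrestricted Hamiltonian path problem, it reduces from the Hamiltonian $s$--$t$ path problem (both endpoints prescribed), citing Schrijver for its NP-completeness. Given $G'$ with designated start $\base$ and end $t$, the paper adjoins a new vertex $u$ adjacent only to $t$; then $G$ has a Hamiltonian path starting at $\base$ iff $G'$ has a Hamiltonian path from $\base$ to $t$, since any Hamiltonian path starting at $\base$ in $G$ must end at the degree-one vertex $u$ and hence pass through $t$ last. Both constructions are one-vertex gadgets and equally lightweight; your version has the minor advantage of reducing from a problem that is perhaps more commonly quoted directly as NP-complete, while the paper's version pins the start vertex of the reduced instance to coincide with the original start $\base$, which may feel more natural given that the target problem already has a designated base.
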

\begin{proof}
	The problem lies in NP.
	Let $G' = (\vertices', \edges')$ be an undirected graph such that $\base, t \in \vertices'$. The problem of deciding whether there is a Hamiltonian path from $\base$ to $t$ in $G'$ is NP-complete, see e.g. Schrijver \cite[Corollary 8.11b]{schrijver2003combinatorial}. Now construct the graph $G = (\vertices, \edges)$ by adding a vertex $u$ to $G'$, connecting it only to $t$. Formally, $\vertices = \vertices' \cup \{u\}$ and $\edges = \edges' \cup \{\{t,u\}\}$. Then $G$ contains a Hamiltonian path with starting point $\base$ if and only if $G'$ contains a Hamiltonian path from $\base$ to $t$.
\end{proof}

We now reduce this fixed-start Hamiltonian path problem to a specific instance of the reconnaissance problem.
\begin{theorem}
	The reconnaissance problem is NP-complete.
\end{theorem}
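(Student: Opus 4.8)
Since we have already argued that the reconnaissance problem lies in NP, it remains to establish NP-hardness, and the plan is to reduce from the fixed-start Hamiltonian path problem, shown NP-complete by the preceding proposition (we may assume it is posed on connected graphs, as disconnected instances are trivial no-instances, and the construction in that proposition preserves connectivity). Given such an instance $(G, \base)$ with $G = (\vertices, \edges)$ and $n = |\vertices|$, I would build the reconnaissance instance on the \emph{same} graph $G$, with crossing probabilities $q_{ij} = 1/2$ for every edge $\{i,j\} \in \edges$, transmission probabilities $p_i = 1$ for every vertex $i$, information values $w_i = 1$ for $i \neq \base$ and $w_\base = 0$, and threshold $r = \sum_{t=1}^{n-1} 2^{-t} = 1 - 2^{-(n-1)}$. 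This instance has size polynomial in that of $(G,\base)$ -- in particular $r$ is a rational with an $O(n)$-bit representation -- and is clearly computable in polynomial time.

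For the forward direction, if $G$ has a Hamiltonian path $(\base, v_1, \dots, v_{n-1})$ starting at $\base$, I would consider the plan whose route follows this path and then returns to $\base$ along an arbitrary walk (for instance retracing the path), and whose send strategy transmits at every vertex. Since $p_i = 1$ everywhere, transmissions never fail, so the mission reaches $v_t$ with probability exactly $2^{-t}$; the transmission made upon first reaching $v_t$ therefore delivers its one new unit of information with probability $2^{-t}$, and summing over $t = 1, \dots, n-1$ gives expected value exactly $r$. The return walk only revisits already-observed vertices, and the mandatory (here empty) transmission at $\base$ contributes nothing, so the total is unchanged. Hence this is a yes-instance.

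For the converse I would prove the following bound for \emph{every} feasible plan: writing $t_i$ for the number of crossings after which vertex $i \neq \base$ is visited for the first time, the expected transmitted value is at most $\sum_{i \neq \base} 2^{-t_i}$. Indeed, since each unit of information is retrieved and transmitted at most once, the expected value equals the sum over $i \neq \base$ of the probability that $i$'s unit is successfully transmitted; that event requires surviving at least the first $t_i$ crossings -- the transmission carrying $i$'s information occurs no earlier than time $t_i$ -- together with possibly further crossings and transmissions, so its probability is at most $(1/2)^{t_i}$. The numbers $t_i$ are $n-1$ distinct positive integers, so $\sum_{i \neq \base} 2^{-t_i} \leq \sum_{t=1}^{n-1} 2^{-t} = r$, with equality only if $\{t_i : i \neq \base\} = \{1, \dots, n-1\}$. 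Thus a plan attains value $\geq r$ only if it attains value exactly $r$, which forces $v_1, \dots, v_{n-1}$ to be the non-base vertices in some order; since consecutive route vertices are adjacent in $G$, this means $(\base, v_1, \dots, v_{n-1})$ is a Hamiltonian path starting at $\base$. Combining the two directions completes the reduction.

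The main obstacle is making the inequality $\Pr[\,i\text{'s unit delivered}\,] \leq (1/2)^{t_i}$ fully rigorous against the generality of the model -- repeated vertices, multiple base visits, the mandatory final transmission, and the freedom to delay transmissions -- and, relatedly, confirming in the forward direction that appending the return walk genuinely leaves the objective unchanged. The remaining ingredients (polynomiality of the construction, the geometric-sum comparison over distinct positive integers, and reading the equality case back as a Hamiltonian path) are routine once that lemma is in hand.
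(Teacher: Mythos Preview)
Your proposal is correct and follows essentially the same reduction as the paper: same graph, unit information values, transmission probabilities equal to $1$, a common crossing probability $q$ (the paper leaves $q\in(0,1)$ free, you specialize to $q=1/2$), and threshold $r=\sum_{k=1}^{n-1}q^k$. The only notable difference is in the converse: the paper splits into the two cases ``not all vertices are visited'' versus ``some vertex is revisited before all are seen'', whereas you give a single unified bound via the first-visit times $t_i$ and the inequality $\sum_{i\neq\base}q^{t_i}\le\sum_{t=1}^{n-1}q^t$ for distinct positive integers $t_i$. That packaging is slightly cleaner but encodes exactly the same combinatorics, so the two arguments are interchangeable.
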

\begin{proof}
	Let $G = (\vertices, \edges)$ be an undirected graph such that $\base \in \vertices$, and pick $q \in (0,1)$. Then set the crossing probabilities to $\probmove_{ij} = q$ for all $\{i,j\} \in \edges$, the transmission probabilities to $\probsend_i = 1$ for all $i \in \vertices$, and the information value to $\valinfo_i = 1$ for all $i \in \vertices \setminus \{\base\}$ (and $\valinfo_{\base} = 0$). Finally, set
	\begin{equation*}
		r = \frac{q - q^n}{1-q},
	\end{equation*}
	where $n = |\vertices|$. We claim that there is a reconnaissance plan with expected value at least $r$ if and only if there is a Hamiltonian path in $G$ starting at $\base$.
	
	If there is a Hamiltonian path starting at the base, we can construct a reconnaissance plan by letting the drone follow this path. At every vertex, the drone would observe and immediately transmit the information gathered there. The expected value of this plan is
	\begin{equation*}
		\sum_{k=1}^{n-1} q^k = \frac{q - q^n}{1-q} = r.
	\end{equation*}
	
	Conversely, suppose there is no Hamiltonian path starting at the base. Then, any reconnaissance plan will fall in one of two categories.
	\begin{enumerate}
		\item The drone does not visit all vertices. Visiting $m < n$ vertices will take at least $m-1$ crossings, making the expected value of such a plan at most
		\begin{equation*}
			\sum_{k=1}^{m-1} q^k = \frac{q - q^m}{1-q} < r.
		\end{equation*}
		\item The drone visits all vertices, but visits a vertex $u \in V$ a second time before it has visited all other vertices. The expected value of this reconnaissance plan is the sum of $n-1$ powers of $q$, where the exponents are distinct positive integers. If the second visit to $u$ occurs after $m < n-1$ crossings, the power $q^m$ does not appear in the computation of the plan's expected value. The expected value is then at most
		\begin{equation*}
			\sum_{k=1}^{m-1} q^k + q \sum_{k=m}^{n-1} q^k = \frac{q - q^m + q(q^m - q^n)}{1-q} < r.
		\end{equation*}
	\end{enumerate}
	We conclude that there is no reconnaissance plan with an expected value of at least $r$. The claim follows from Theorem~\ref{prop:PlanLength}.
\end{proof}

\section{Mixed-integer linear problem formulation for the autonomous reconnaissance problem}
This section will formulate the autonomous reconnaissance problem as a mixed-integer linear programming problem. To this end, we fix a time horizon $\timeperiods = \{1, \dots, \timemax\}$. In order to also find routes that require fewer time periods, we add the edge $\{\base,\base\}$ to the graph. In theory, we can take $\timemax=n^2-1$ as per Theorem \ref{prop:PlanLength}. Better estimates will be discussed in Section \ref{subsec:Performance}.

We will determine the drone's actions during each time period. A time period begins with the drone traveling from one location to another. If the drone survives this, the drone observes the information at its new location (if it was not observed before). Finally, the drone may or may not attempt to send the information it has gathered but not sent before. After each of these three stages, we can compute the drone's survival probability and the `expected transmission value' of the information the drone is carrying -- we will define these terms in more detail below. See Figure~\ref{fig:TimePeriodOverview} for an overview.

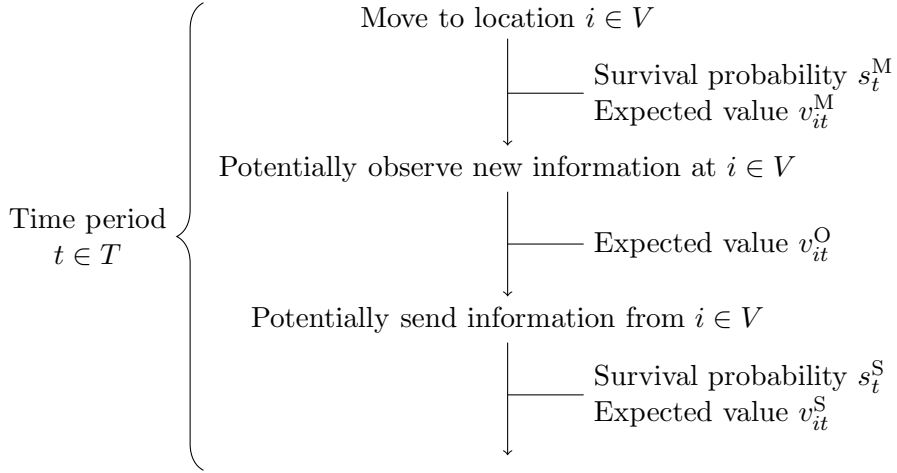
\begin{figure}[ht]
    \centering
    \begin{tikzpicture}
    	\node (A) at (0,0) {Move to location $i \in \vertices$};
    	\node (B) at (0,-2) {Potentially observe new information at $i \in \vertices$};
    	\node (C) at (0,-4) {Potentially send information from $i \in \vertices$};
    	\draw[->] (A) -- (B);
    	\draw[->] (B) -- (C);
    	\draw[->] (C) -- (0,-5.8);
    	
    	\node[align=left, anchor=west] (a) at (1,-1) {Survival probability $\survmove_t$ \\ Expected value $\expmove_{it}$};
    	\node[align=left, anchor=west] (b) at (1,-3) {Expected value $\expobs_{it}$};
    	\node[align=left, anchor=west] (c) at (1,-5) {Survival probability $\survsend_t$ \\ Expected value $\expsend_{it}$};
    	\draw (a) -- (0,-1);
    	\draw (b) -- (0,-3);
    	\draw (c) -- (0,-5);
    	
    	\draw[decorate, decoration={brace, amplitude=10pt, mirror}] (-4,0.2) -- node[midway, anchor=east, align=center, xshift=-10pt] {Time period \\ $t \in \timeperiods$} (-4,-6);
    \end{tikzpicture}
    \caption{Overview of a time period in the mixed-integer linear programming problem.}
    \label{fig:TimePeriodOverview}
\end{figure}

We first describe how we model the three stages of a time period. In Section~\ref{subsec:Moving}, we model the movement of the drone through the graph. Section~\ref{subsec:Observing} then describes the observation of the information at a node, while Section~\ref{subsec:Sending} discusses sending the information. After that, we describe how the drone's survival probability and expected transmission value can be computed in Section~\ref{subsec:Survival} and Section~\ref{subsec:ExpectedValue}, respectively. We state the final model in Section~\ref{subsec:FinalModel}.

\subsection{Moving the drone}
\label{subsec:Moving}
The route of the drone will be modelled by the decision variables
\begin{equation*}
	\varmove_{ijt} = \begin{cases}
		1 & \text{if the drone travels over $\{i,j\} \in \edges$ from $i$ to $j$ at time $t \in \timeperiods$}\\
		0 & \text{otherwise}.
	\end{cases}
\end{equation*}
Recall that $\{\base,\base\} \in \edges$, so the drone can also stay at the base in any time period.

We need a few constraints to let these decision variables describe a valid walk through the network. In every time period, the drone traverses exactly one edge, that is,
\begin{equation}
    \label{eq:OneEdgePerTimePeriod}
	\sum_{\{i, j\} \in \edges} \varmove_{ijt} = 1 \qquad \forall t \in \timeperiods.
\end{equation}
To make the drone start and end at the base, we require
\begin{equation}
    \label{eq:StartAndEndAtBase}
	\sum_{j: \{\base,j\} \in \edges} \varmove_{\base j1} = \sum_{i: \{i,\base\} \in \edges} \varmove_{i \base \timemax} = 1.
\end{equation}
In all other time periods, the drone starts at the location where it ended in the previous time period, meaning
\begin{equation}
    \label{eq:FlowConservation}
	\sum_{i: \{i,j\} \in \edges} \varmove_{ij,t-1} = \sum_{i: \{j,i\} \in \edges} \varmove_{jit} \qquad \forall j \in \vertices, t \in \timeperiods \setminus \{1\}.
\end{equation}

\subsection{Observing information}
\label{subsec:Observing}
When a drone arrives at a location for the first time, the information there is observed. We model this with the decision variables
\begin{equation*}
	\varobs_{it} = \begin{cases}
		1 & \text{if the drone observes the information at $i \in \vertices$ at time $t \in \timeperiods$}\\
		0 & \text{otherwise}.
	\end{cases} 
\end{equation*}
By allowing the information at any vertex to be observed only once, that is,
\begin{equation}
    \label{eq:OneObservation}
	\sum_{t \in \timeperiods} \varobs_{it} \leq 1 \qquad \forall i \in \vertices,
\end{equation}
the value of the information there can only be added to the accumulated information once. Since the objective is to maximize the expected value of the transmitted information, there will be an optimal solution where the information in every visited vertex $i \in \vertices$ is observed the first time $i$ is visited. After all, observing the information during later visits can only decrease the expected value of that information.

Of course, the information at $i \in \vertices$ can only be observed in time period $t \in \timeperiods$ if one also arrives in $i$ at time $t$. We model this by
\begin{equation}
    \label{eq:ObserveWhenThere}
	\varobs_{it} \leq \sum_{j: \{i,j\} \in \edges} \varmove_{ijt} \qquad \forall i \in \vertices, t \in \timeperiods.
\end{equation}

\subsection{Sending information}
\label{subsec:Sending}
After the new information has been observed, the drone may or may not transmit the information. This is modeled by the decision variables
\begin{equation*}
	\varsend_{it} = \begin{cases}
		1 & \text{if the drone transmits information from $i \in \vertices$ at time $t \in \timeperiods$}\\
		0 & \text{otherwise}.
	\end{cases}
\end{equation*}
Similar to observations, transmissions can only occur at a location in a certain time slot if one also arrives at that location in that time slot, that is,
\begin{equation}
    \label{eq:SendWhenThere}
	\varsend_{it} \leq \sum_{j: \{i,j\} \in \edges} \varmove_{ijt} \qquad \forall i \in \vertices, t \in \timeperiods.
\end{equation}

(One may argue that the values of the variables $x_{ijt}$ are already sufficient to determine the location of the drone, and that therefore there is no need to introduce transmission variables that are also indexed by the locations. This is true, but doing so would introduce additional non-linearities later on.)

\subsection{Survival probability}
\label{subsec:Survival}
The objective is to maximize the total expected value of the transmitted information. To compute the expected value of a transmission, we need to know the probability that the drone has survived until a time slot in which a transmission takes place.

We therefore introduce two new sets of decision variables:
\begin{itemize}
    \item $\survmove_t$ is the probability that the drone has survived from the start of the time horizon until after the \emph{moving} phase in time period $t \in \timeperiods$;
    \item $\survsend_t$ is the probability that the drone has survived from the start of the time horizon until after the \emph{sending} phase in time period $t \in \timeperiods$.
\end{itemize}
See Figure~\ref{fig:TimePeriodOverview} for an illustration. To ease notation, we also fix the parameter $\survsend_0 = 1$ to ensure the drone leaves the base with probability one.

The survival probability after moving is
\begin{equation}
    \label{eq:NonLinearSurvMove}
    \survmove_t = \survsend_{t-1} \sum_{\{i, j\} \in \edges} \probmove_{ij} \varmove_{ijt} \qquad \forall t \in \timeperiods,
\end{equation}
where $\probmove_{ij}$ is the probability of successfully moving over edge $\{i,j\} \in \edges$. Note that \eqref{eq:NonLinearSurvMove} is non-linear in the decision variables, because $\survsend_{t-1}$ is multiplied by $\varmove_{ijt}$. We can however linearize \eqref{eq:NonLinearSurvMove} if we replace $\survsend_{t-1} \varmove_{ijt}$ by the variable $\survsendvarmove_{ijt}$ subject to the constraints
\begin{subequations}
    \label{eq:DefineSurvsendVarmove}
    \begin{align}
        &\survsendvarmove_{ijt} \leq \survsend_{t-1} &&\forall \{i,j\} \in \edges, t \in \timeperiods\\
		&\survsendvarmove_{ijt} \leq \varmove_{ijt} &&\forall \{i,j\} \in \edges, t \in \timeperiods\\
		&\survsendvarmove_{ijt} \geq \survsend_{t-1} - (1 - \varmove_{ijt}) &&\forall \{i,j\} \in \edges, t \in \timeperiods\\
		&\survsendvarmove_{ijt} \geq 0 &&\forall \{i,j\} \in \edges, t \in \timeperiods.
    \end{align}
\end{subequations}
The definition of $\survmove_t$ from \eqref{eq:NonLinearSurvMove} can now be written as the linear equations
\begin{equation}
    \label{eq:DefineSurvMove}
    \survmove_t = \sum_{\{i, j\} \in \edges} \probmove_{ij} \survsendvarmove_{ijt} \qquad \forall t \in \timeperiods.
\end{equation}

Next, the survival probability after sending is equal to the survival probability after moving, unless one performs a transmission in the time period. That means
\begin{equation}
    \label{eq:NonLinearSurvSend}
    \survsend_t = \survmove_t \left( 1 - \sum_{i \in \vertices} (1 - \probsend_i) \varsend_{it} \right) \qquad \forall t \in \timeperiods,
\end{equation}
where $\probsend_i$ is the probability of a successful transmission at location $i \in \vertices$. Since \eqref{eq:NonLinearSurvSend} is also non-linear in the decision variables, we replace every product $\survmove_t \varsend_{it}$ by the variable $\survmovevarsend_{it}$ subject to the constraints
\begin{subequations}
    \label{eq:DefineSurvmoveVarsend}
    \begin{align}
        &\survmovevarsend_{it} \leq \survmove_t && \forall i \in \vertices, t \in \timeperiods\\
        &\survmovevarsend_{it} \leq \varsend_{it} && \forall i \in \vertices, t \in \timeperiods\\
        &\survmovevarsend_{it} \geq \survmove_t - (1 - \varsend_{it}) && \forall i \in \vertices, t \in \timeperiods\\
        &\survmovevarsend_{it} \geq 0 && \forall i \in \vertices, t \in \timeperiods.
    \end{align}
\end{subequations}
The definition of $\survsend_t$ from \eqref{eq:NonLinearSurvSend} can now be written as the linear equations
\begin{equation}
    \label{eq:DefineSurvSend}
    \survsend_t = \survmove_t - \sum_{i \in \vertices} (1 - \probsend_i) \survmovevarsend_{it} \qquad \forall t \in \timeperiods.
\end{equation}

\subsection{Expected transmission value}
\label{subsec:ExpectedValue}
We also track the value of the non-transmitted information the drone has gathered, multiplied by the survival probability of the drone. We call this the `expected transmission value' of the drone, since it captures the expected value of a transmission made in a certain time period.

There are three types of expected transmission values that we track by decision variables:
\begin{itemize}
    \item $\expmove_{it}$ is the expected transmission value of the drone after moving to location $i \in \vertices$ in time period $t \in \timeperiods$;
    \item $\expobs_{it}$ is the expected transmission value of the drone after potentially observing new information at location $i \in \vertices$ in time period $t \in \timeperiods$;
    \item $\expsend_{it}$ is the expected transmission value of the drone after potentially sending information from location $i \in \vertices$ in time period $t \in \timeperiods$.
\end{itemize}
See Figure~\ref{fig:TimePeriodOverview} for an illustration. As a matter of initialization, we fix the parameter $\expsend_{i0} = 0$ for all $i \in \vertices$.

By moving from $j \in \vertices$ to $i \in \vertices$ at time $t \in \timeperiods$, the expected transmission value gets multiplied by the probability of successfully moving over the edge $\{j,i\}$. 
In general, we get
\begin{equation}
    \label{eq:NonLinearExpMove}
	\expmove_{it} = \sum_{j: \{i,j\} \in \edges} \probmove_{ji} \varmove_{jit} \expsend_{j,t-1} \qquad \forall i \in \vertices, t \in \timeperiods.
\end{equation}
Since \eqref{eq:NonLinearExpMove} is non-linear in the decision variables, we replace every product $\varmove_{jit} \expsend_{j,t-1}$ by the variable $\expsendvarmove_{jit}$ subject to the constraints
\begin{subequations}
    \label{eq:DefineExpsendVarmove}
    \begin{align}
        &\expsendvarmove_{jit} \leq \expsend_{j,t-1} && \forall \{i,j\} \in \edges, t \in \timeperiods\\
        &\expsendvarmove_{jit} \leq \bigM \varmove_{jit} && \forall \{i,j\} \in \edges, t \in \timeperiods\\
        &\expsendvarmove_{jit} \geq \expsend_{j,t-1} - M(1 - \varmove_{jit}) && \forall \{i,j\} \in \edges, t \in \timeperiods\\
        &\expsendvarmove_{jit} \geq 0 && \forall \{i,j\} \in \edges, t \in \timeperiods,
    \end{align}
\end{subequations}
where we can take $\bigM = \sum_{i \in \vertices} \valinfo_i$.
The definition of $\expmove_{it}$ from \eqref{eq:NonLinearExpMove} can now be written as the linear equations
\begin{equation}
    \label{eq:DefineExpMove}
    \expmove_{it} = \sum_{j: \{i,j\} \in \edges} \probmove_{ji} \expsendvarmove_{jit} \qquad \forall i \in \vertices, t \in \timeperiods.
\end{equation}

Next, observing new information at location $i \in \vertices$ at time $t \in \timeperiods$ adds $\valinfo_i \survmove_{it}$ to the expected transmission value. In general, we therefore have
\begin{equation}
    \label{eq:NonLinearExpObs}
    \expobs_{it} = \expmove_{it} + \valinfo_i \survmove_{it} \varobs_{it} \qquad \forall i \in \vertices, t \in \timeperiods.
\end{equation}
Since \eqref{eq:NonLinearExpObs} is non-linear in the decision variables, we replace every product $\survmove_{it} \varobs_{it}$ by the variable $\survmovevarobs_{it}$ subject to the constraints
\begin{subequations}
    \label{eq:DefineSurvmoveVarobs}
    \begin{align}
        &\survmovevarobs_{it} \leq \survmove_{it} && \forall i \in \vertices, t \in \timeperiods\\
        &\survmovevarobs_{it} \leq \varobs_{it} && \forall i \in \vertices, t \in \timeperiods\\
        &\survmovevarobs_{it} \geq \survmove_{it} - (1 - \varobs_{it}) && \forall i \in \vertices, t \in \timeperiods\\
        &\survmovevarobs_{it} \geq 0 && \forall i \in \vertices, t \in \timeperiods.
    \end{align}
\end{subequations}
The definition of $\expobs_{it}$ from \eqref{eq:NonLinearExpObs} can now be written as the linear equations
\begin{equation}
    \label{eq:DefineExpObs}
    \expobs_{it} = \expmove_{it} + \valinfo_i \survmovevarobs_{it} \qquad \forall i \in \vertices, t \in \timeperiods.
\end{equation}

Finally, making a transmission sets the expected transmission value to zero. If no transmission is made, the expected transmission value does not change. This can be modelled by the constraints
\begin{subequations}
    \label{eq:DefineExpSend}
    \begin{align}
        & \expsend_{it} \leq \expmove_{it} && \forall i \in \vertices, t \in \timeperiods\\
        & \expsend_{it} \leq \bigM (1 - \varobs_{it}) && \forall i \in \vertices, t \in \timeperiods.
    \end{align}
\end{subequations}

\subsection{Final model}
\label{subsec:FinalModel}
As mentioned above, the objective is to maximize the expected value of the transmitted information, which would be
\begin{equation*}
    \sum_{i \in \vertices} \sum_{t \in \timeperiods} \probsend_i \expobs_{it} \varsend_{it}.
\end{equation*}
To linearize the objective, we replace every product $\expobs_{it} \varsend_{it}$ by the variable $\expobsvarsend_{it}$ subject to the constraints
\begin{subequations}
    \label{eq:DefineExpobsVarsend}
    \begin{align}
        &\expobsvarsend_{it} \leq \expobs_{it} && \forall i \in \vertices, t \in \timeperiods\\
        &\expobsvarsend_{it} \leq \bigM \varsend_{it} && \forall i \in \vertices, t \in \timeperiods\\
        &\expobsvarsend_{it} \geq \expobs_{it} - \bigM (1 - \varsend_{it}) && \forall i \in \vertices, t \in \timeperiods\\
        &\expobsvarsend_{it} \geq 0 && \forall i \in \vertices, t \in \timeperiods.
    \end{align}
\end{subequations}

In conclusion, the final model is
\begin{align*}
    \max\, & \sum_{i \in \vertices} \sum_{t \in \timeperiods} \probsend_i \expobsvarsend_{it}\\
    \text{subject to } & \varmove_{ijt} \in \{0,1\} && \forall \{i,j\} \in \edges, t \in \timeperiods\\
    & \varobs_{it}, \varsend_{it} \in \{0,1\} && \forall t \in \timeperiods,
\end{align*}
and constraints \labelcref{eq:OneEdgePerTimePeriod,eq:StartAndEndAtBase,eq:FlowConservation,eq:OneObservation,eq:ObserveWhenThere,eq:SendWhenThere,eq:DefineSurvsendVarmove,eq:DefineSurvMove,eq:DefineSurvmoveVarsend,eq:DefineSurvSend,eq:DefineExpsendVarmove,eq:DefineExpMove,eq:DefineSurvmoveVarobs,eq:DefineExpObs,eq:DefineExpSend,eq:DefineExpobsVarsend}.

\subsection{Performance}\label{subsec:Performance}

As was mentioned at the beginning of this section, a time horizon $\timemax$ needs to be set for the maximum number of time periods in a solution. From Proposition \ref{prop:PlanLength} we know that an optimal solution is guaranteed if we take $\timemax=|V|^2-1$. However, if all crossing probabilities are nonzero, often the optimal route is much shorter, as is supported by experimental results up to $|V|=10$ (some by the method in the next section).

The value of $\timemax$ is very influential on the run time of an implementation of the model. We have implemented the model in Python, using the Gurobi solver \cite{gurobi} on an 8 core Apple M2 processor with 8 GB of memory.

First, we ran the code for the graph in Figure \ref{fig: example graph} with 4 vertices. We fixed the time horizon at $\timemax=7$. The optimal solution, the same as described in Section \ref{subsec: first example}, was found in $0.29$ seconds.

We then generated a graph with random crossing probabilities on $6$ vertices. Setting $\timemax=9$, an optimal solution was found in $70.89$ seconds. For $\timemax=10$, the same optimal solution was found, but this took $736.62$ seconds (a bit over 12 minutes). For $\timemax=12$, the program ran out of memory after 538825 seconds (over 6 days). An attempt on a bigger machine with 12 CPU's and 80 GB of memory, also ran out of memory before finding a solution.

\section{Genetic algorithm}
\label{section: genetic algorithm}
Even though the above Mixed Integer Linear Program allows us to solve the reconnaissance problem to optimality, it quickly becomes too slow to be useful in practical application. The next section investigates whether we can replace the program with a heuristic, namely a genetic algorithm. This is an algorithm developed by Holland \cite{holland} in the 1970s and has been used in a lot of applications (see e.g. §14.4 in \cite{hillier} or §6 in \cite{yang} for a more in-depth summary of the method and its applications). Just as the name suggests, a genetic algorithm is based on the theory of evolution and tries to incorporate the ideas of survival of the fittest. The main idea is that at first, the algorithm is initialized with a list of feasible solutions, i.e. the first generation. All solutions in the generation are then ranked based on their expected value and this ranking is used to construct the next generation. The worst solutions of the generation are discarded: these genes do not survive. The best solutions are immediately copied into the next generation. This next generation is then filled with crossings from two solutions from the previous one. By repeating this process enough times and adding mutations to increase variation in the different generations, we expect that natural selection will lead to the optimal solution. A schematic depiction of how to construct new generations is given in Figure \ref{fig: genetic algorithm overview}. The advantage of this metaheuristic is that it allows us to optimize the route and transmission strategy simultaneously. We are able to cross both elements to obtain a new strategy that hopefully inherits the good traits of its parent strategies. Moreover, using specific crossing schemes and mutations we can make sure that all considered strategies in a generation are feasible and are not filled with routes that consist of non-existing edges.

\begin{figure}
    \centering
    \begin{tikzpicture}[scale=0.4]
    	\definecolor{mygreen}{rgb}{0.1,0.8,0.2}
    	\definecolor{mygray}{gray}{0.85}
    
    	\foreach \h in {0,...,3}{\draw[fill=mygreen] (0,-\h) rectangle (4,-\h-0.75);}
    	\foreach \h in {4,...,11}{\draw[fill=mygray] (0,-\h) rectangle (4,-\h-0.75);}
    	\foreach \h in {12,...,13}{\draw[fill=red] (0,-\h) rectangle (4,-\h-0.75);}
    	
    	\node[single arrow, draw, fill=mygreen, minimum height=2cm] at (7,-1.75) {};
    	\node[single arrow, draw, fill=mygreen, minimum height=2cm, rotate=-30] at (7,-4.5) {};
    	\node[single arrow, draw, fill=mygray, minimum height=2cm] at (7,-7.75) {};
    	
    	\foreach \h in {0,...,3}{\draw[fill=mygreen] (10,-\h) rectangle (14,-\h-0.75);}
    	\foreach \h in {4,...,13}{\draw[fill=mygray!85!mygreen] (10,-\h) rectangle (14,-\h-0.75);}
    	
    	\node[above] at (2,0) {Generation $n$};
    	\node[above] at (12,0) {Generation $n+1$};
    \end{tikzpicture}
    \caption{Overview of how to construct a new generation. First order the current generation from best to worst. Then discard the worst genes in the generation. This corresponds to the red part. Copy the best genes -- the green part -- directly in the new generation. The rest of the genes in the new generation is made of crossovers of genes that are either in the green part or the grey part of the previous generation.}
    \label{fig: genetic algorithm overview}
\end{figure}
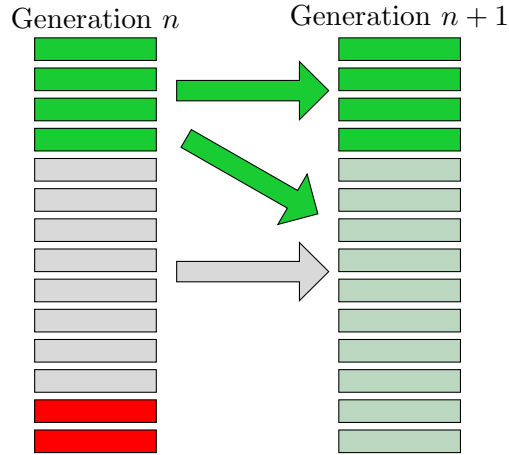

\subsection{Set-up of the algorithm}

\subsubsection{Initialization}
To initialize the algorithm, a list of random routes is generated. These routes are constructed by making a random walk of random length through the graph, starting from the base. Just as in the Linear Program, where the value of $\timemax$ had to be guessed, the upper bound $L_{\max}$ on the length of the walk also has to be chosen here. Similarly, we also choose a lower bound $L_{\min}$. Then we sample the actual length of the random walk uniformly at random from the set $\{L_{\min}, L_{\min} + 1, \dots, L_{\max} -1, L_{\max} \}$. Secondly, we repeatedly pick the next vertex in the walk uniformly at random from the vertices adjacent to the current position. If the last vertex of the walk is not the base vertex, the shortest path between the end of the walk and the base is added to the walk, using Dijkstra's algorithm \cite{dijkstra1959note}. As a rule of thumb, when considering a graph on $n$ vertices, we picked $L_{\min}$ to be approximately equal to $n-1$ and $L_{\max}$ to $n+100$. This might seem like a pretty high upper bound, especially because realistically, reconnaissance mission do not include more than 10 surveillance locations. But even though these longer routes are not necessarily better than the shorter ones -- at some point all information has been transmitted and making an additional loop doesn't change the expected value -- the final part of a route with useless and unnecessary walks can become useful after a crossover.

To quickly find a good send strategy for every route in the first generation, a local search algorithm is used. This is a strategy where at every vertex a transmission is made with probability $\pi$. We found that $\pi = 1/3$  gives a good starting point for the search algorithm. Next, we pick the best transmission strategy amongst all strategies at Hamming distance 1 of the current one. This process is repeated until there is no more improvement possible. Note that this method does not guarantee a global optimum. It is possible that the search gets stuck in a local maximum while there is a different send strategy with an even better expected value.
Based on their expected values we make a ranking of the strategies consisting of both a route and a transmission strategy.

\subsubsection{Cross-over}
In each iteration of the algorithm, a new generation is constructed based on the previous one. As already mentioned, we first copy the 10\% routes with the very best expected values directly in the next generation. Simultaneously, the 7.5\% worst routes are discarded. To construct a route in the next generation, two routes are picked uniformly at random from the best 92.5\%, i.e. all routes except the discarded ones. These two routes become the parents of a new route, hoping that the good genes of the parents will be inherited by the child. To cross these two parents, a vertex that is present in both routes is randomly chosen and the parent routes are sliced in two parts at the first occurrence of this vertex. From the one route we use the first part, from the other route the second part. The send strategies are sliced and put together in the exact same way. Again, this does not imply that the obtained send strategy is optimal for the newly constructed route. Moreover, it is possible that the only vertex in common is the base camp at the beginning and end of one of the routes. In that case, two new routes are picked and these are crossed instead.

This strategy keeps being repeated: the routes in this generation are ranked from best to worst and a new generation is constructed. After enough generations the hope is that eventually the globally optimal strategy will appear.

\subsubsection{Mutations}
To increase variation in the different generations, some of the newly crossed strategies are mutated. This means that they are slightly altered in their route or transmission strategy. 

Three mutations have been chosen to increase the number of considered strategies:

\begin{itemize}
    \item \textit{Added random walk.} Uniformly at random, one point that is not the last one is chosen in the route. After this point, a random walk of random length is added to the route. This random walk is generated in the same way as we did for the initialization. This mutation increases the length of the route.
    \item  \textit{Vertex flip.} One random vertex in the route is changed to a common neighbour of its preceding and succeeding vertex. Note that the existence of such a common neighbour is not guaranteed in a non-complete graph.
    \item  \textit{Send flip.} Uniformly at random one element in the send strategy is flipped: a 0 becomes 1 or vice versa.
\end{itemize}

These mutations are not always performed and for each strategy, there is at most one mutation performed. The \textit{added random walk} mutation is performed with a probability of 0.01. Since this mutation increases the length of the route, a new send strategy has to be constructed as well. This is done by discarding the old send strategy and performing the same local search as for the initialization for this new route.
The \textit{vertex flip} mutation is performed with a probability of 0.2 and the \textit{send flip} with a probability of 0.1. Since the length of the route doesn't change in either one, the send strategy is not changed after these mutations (this wouldn't make any sense for the \textit{send flip} either).

Having implemented these mutations, we can test whether they actually improve the genetic algorithm. At first, we considered the complete graph on 6 vertices, $K_6$. In this case, the genetic algorithm without mutations is able to find the same believed to be optimal strategy found using the MILP in 98\% of the runs. Hence there is no need for mutations, as they increase running time without having a lot of added value. On a non-complete graph on 10 vertices, this success rated dropped a bit to 90/100 times. However, the program returns the strategy in a couple of seconds and hence, is fit for the job. When considering $K_{10}$ finding the optimal strategy becomes more interesting. 

At first, we have ran the genetic algorithm on the graph where every vertex that is not the base camp holds one unit of information and the transmission probabilities (diagonal entries) and crossing probabilities (off-diagonal entries) are given by the following $10 \times 10$-matrix:  

$$
\begin{bmatrix}
         1 &0.95 &0.87 &0.93 &0.99 &0.96 &0.92 &0.88 &0.9 &0.93 \\ 0.95 &0.9 &0.86 &0.97 &0.93 &0.85 &0.82 &0.91 &0.93 &0.96 \\ 0.87 &0.86 &0.94 &0.92 &0.96 &0.98 &0.99 &0.82 &0.85 &0.91 \\
         0.93 &0.97 &0.92 &0.99 &0.87 &0.93 &0.9 &0.9 &0.89 &0.95 \\ 0.99 &0.93 &0.96 &0.87 &0.9 &0.94 &0.82 &0.85 &0.92 &0.9 \\ 0.96 &0.85 &0.98 &0.93 &0.94 &0.95 &0.91 &0.92 &0.91 &0.96 \\
         0.92 &0.82 &0.99 &0.9 &0.82 &0.91 &0.93 &0.98 &0.92 &0.93 \\ 0.88 &0.91 &0.82 &0.9 &0.85 &0.92 &0.98 &0.95 &0.99 &0.87 \\
         0.9 &0.93 &0.85 &0.89 &0.92 &0.91 &0.92 &0.99 &0.94 &0.85 \\ 0.93 &0.96 &0.91 &0.95 &0.9 &0.96 &0.93 &0.87 &0.85 &0.92
\end{bmatrix}.
$$

Running the algorithm without mutations yields the following best strategy:
\begin{align*}
    \text{Route: } &(0,4,0,5,2,6,7,8,1,3,9,3,0) \\
    \text{Send: }  &(0,0,1,0,0,0,1,0,0,1,0,1,1),
\end{align*} 
with value 7.305181. 

Out of 100 runs, this strategy is found 19 times. Compared to the previous graphs -- where the optimal strategy was almost always found -- this is not a high success rate. With this performance, a lot of runs are required to be pretty sure of finding the best solution. Say we want to have a probability greater or equal than 0.95. To find the optimal strategy assuming a success chance at every try of 19/100, we need to run the algorithm $h$ times, where 
$$ 1 - \left( \frac{81}{100} \right)^h \geq 0.95, $$
which implies that $h \geq 15$. While the algorithm still returns a strategy within a minute and this is not an infeasible number of runs, it would be more compelling to increase the success rate of a single try. 

To see how the genetic algorithm behaves, one can look at the best value of the strategies in each generation. This is depicted in Figure \ref{fig: zonder mutatie} for four runs of the algorithm. The $x$-axis shows the generation, the $y$-axis the best expected value. In all four runs, it is clear that the most progress is in the first generations of the algorithm and there is only one run that keeps improving the optimal value. The other runs quickly get stuck in a local maximum. The worst local maximum value where a run gets stuck on has expected value of around 7.11, which is quite far away from the believed to be optimal 7.305181.

    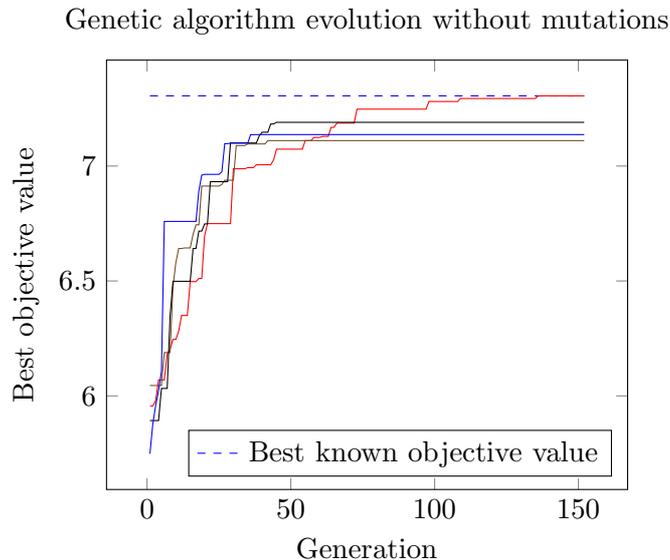
\begin{figure}[ht]
    \centering
    \begin{tikzpicture}
        \begin{axis}[xlabel={Generation}, ylabel={Best objective value}, title={Genetic algorithm evolution without mutations}, legend pos=south east]
    		\addplot+[no marks, dashed, domain=1:152] {7.30518118};
    		\addlegendentry{Best known objective value};
    		\foreach \i in {1,2,3,4}{
    			\addplot+[no marks] table [x=generation, y=run_\i] {Figures/Zonder_mutatie/coordinates_without_mutations.txt};
    		}
        \end{axis}
    \end{tikzpicture}
 
    \caption{Four runs of the genetic algorithm for $K_{10}$. The $x$-axis shows the generation, the $y$-axis the best expected value. There is only one run that finds the optimal strategy.}
    \label{fig: zonder mutatie}
\end{figure}

The plot shows that the algorithm gets stuck too often in local optima. This could be caused by a lack of variation in the different generations which means that other routes are left unexplored. Indeed, the 150th generation of the algorithm contains almost always exactly only one strategy that fills the entire list: the (local) maximum where it got stuck. Thus it is clear that the variation throughout the algorithm should be increased to hopefully find the optimal strategy more often. 

Rerunning the algorithm with all mutations shows their added value for larger graphs. Again, we have plotted four runs of the algorithm in Figure~\ref{fig: met mutatie}. This plot shows that adding mutations helps the algorithm to explore more strategies and end up in the best known strategy. 

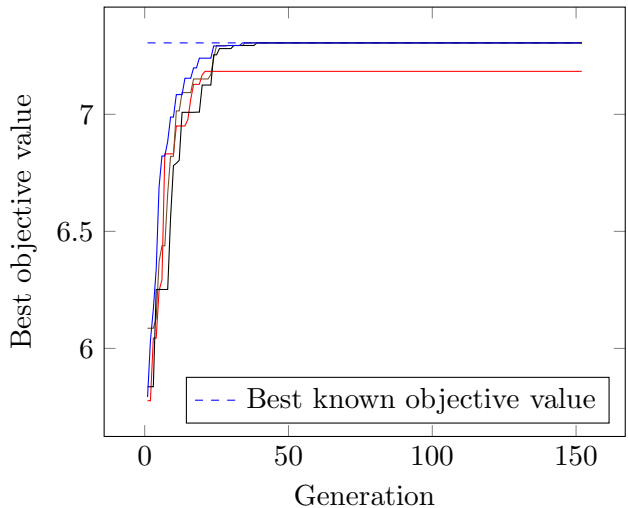
\begin{figure}[ht!]
    \centering
    \begin{tikzpicture}
        \begin{axis}[xlabel={Generation}, ylabel={Best objective value}, title={Genetic algorithm evolution with mutations}, legend pos=south east]
    		\addplot+[no marks, dashed, domain=1:152] {7.30518118};
    		\addlegendentry{Best known objective value};
    		\foreach \i in {1,2,3,4}{
    			\addplot+[no marks] table [x=generation, y=run_\i] {Figures/Met_mutatie/coordinates_with_mutations.txt};
    		}
        \end{axis}
    \end{tikzpicture}
    \caption{Four runs of the genetic algorithm with mutation. The $x$-axis shows the generation, the $y$-axis the best expected value.}
    \label{fig: met mutatie}
\end{figure}

\subsection{Comparison of the Different Versions}
Figure \ref{fig: met mutatie} shows that the mutations can enable the genetic algorithm to get out of local optima and eventually find the believed to be best planning. It mostly increases the speed by which a run is able to find the optimal strategy. But how often does such a successful run happen? To test this, the genetic algorithm was run a hundred times for $K_{10}$. Out of these 100 runs, the optimal value was found 61 times. Given the fact that one run takes less than a minute, it is feasible to run the genetic algorithm ten times. Assuming that the optimal value is found with a probability of 61/100 per run, the probability to find this strategy after ten runs is 0.9999. 

To put this result into perspective, the comparison of four different genetic algorithms can be made. The first considered algorithm does not contain any mutations, the second one only contains the \textit{vertex flip} mutation. Then there is the genetic algorithm with only the \textit{added random walk} mutation and lastly there is the genetic algorithm with all mutations combined as explained above. 

\begin{table}[ht]
\centering
\begin{tabular}{c|c}
    Genetic algorithm & \# of successful runs \\
    \midrule
    No mutations & 19/100 \\
    Send flip & 22/100 \\
    Vertex flip & 32/100 \\
    Added random walk & 55/100 \\
    Combination & 61/100
\end{tabular}
\caption{Performance of different versions of the genetic algorithm on hundred runs for $K_{10}$.}
\label{tab: 100 runs}
\end{table}

There is a clear difference in the performance between the different versions. Most mutations increase the probability of finding the optimal value. However, the send flip in itself does not clearly improve the algorithm. The vertex flip on its own does perform better, but 32/100 is still not amazing. Adding the random walk increases the odds of success more. In more than half of the runs the optimal planning was found. The genetic algorithm where three types of mutations are combined is still a little bit better, but since the genetic algorithm includes a lot of randomness, it is hard to determine whether this is actually better than only adding a random walk. Since both algorithms have no distinguishable speed difference, they seem interchangeable. 

In any case, the version of the algorithm with the combination of the mutations provides a tool to quickly and accurately find the believed to be optimal planning for larger graphs.

\section{Towards a Generalization: Multiple Drones}

So far, we have considered surveillance missions where only one drone is deployed. It makes sense to extend this scenario to missions with multiple drones. This section provides a generalization of the model for such missions and discusses options to adapt our algorithm to this model. First of all, similar modeling choices as before need to be made.

\begin{itemize}
    \item All drones start from the safe base camp, where there is no information to be retrieved. The base camp is also the last vertex in the routes of all drones.
    \item Routes and send strategies are determined before the mission, so there is no way of modifying the mission if a drone is intercepted by the enemy. Also, if this happens, it is assumed that the other drones can continue their mission.
    \item All drones are allowed to retrieve the same information. This means that the information is considered to be a picture taken at a location rather than a package that is retrieved.
    \item If information is successfully transmitted by a drone, the other drones can still send the same information. But this information should not be double counted. The expected value of transmitted information per vertex does increase if its information is sent multiple times, but can never exceed the total amount of information available at that specific vertex.
\end{itemize}

The last condition requires a new way of computing the expected value, as we cannot just look at the different transmissions per drone. Therefore we derive a new formula for the expected value that can be used for any number of drones.

While for the MILP, it was successful to focus on the transmission vertices, we will shift focus to how much information from each vertex is expected to reach the base camp. This enables us to compute the expected value for multi-drone missions. The expected amount of information that is safely transmitted can also be formulated as the sum of the expected fractions of each unit of information that is safely transmitted. If vertex $i$ contains $w_i$ units of information, then we compute $\mathbb{E}\left[ Y_i \right]$: the expected amount of information from vertex $i$ that reaches the base. In the case of one drone, this is equal to the product of $w_i$ with the probability that the drone safely reaches the first next transmission vertex and the probability that the transmission at this vertex is successful.

So, let $D_i$ denote the event that the drone successfully transmits the information that it collected at vertex $i$. Then, for a single drone mission,

\[ \mathbb{E}[X] = \sum_{i = 1}^{|V|} w_i \cdot \mathbb{P}(D_i).\]

This formula can be extended to multiple drones, but we will require some more notation. Suppose that we have $\ell$ drones and let $D_{iv}$ be the binary random variable that equals 1 if drone $i$ succeeds in transmitting the information from vertex $v$. Moreover, let $I \subseteq [\ell] = \{1, \dots, \ell\}$. 
Then for a given $v$, we are interested in 
\begin{align*}
    \mathbb{P} \left (\bigcup_{i \in [\ell]} D_{iv} \right) &= \sum_{\substack{I \subseteq [\ell] \\ I \neq \emptyset}} (-1)^{|I|-1} \cdot \mathbb{P} \left( \bigcap_{i \in I} D_{iv} \right)\\
    &= \sum_{\substack{I \subseteq [\ell] \\ I \neq \emptyset}} (-1)^{|I|-1} \cdot \prod_{i \in I} \mathbb{P} \left( D_{iv} \right),
\end{align*}
where we have used the inclusion-exclusion principle and independence of probabilities.

Hence, this leads to a formula for the expected value $\mathbb{E}\left[ Y_i \right]$:

$$\mathbb{E}\left[ Y_i \right] = w_i \cdot \sum_{\substack{I \subseteq [\ell] \\ I \neq \emptyset}} (-1)^{|I|-1} \prod_{i \in I} \mathbb{P} \left( D_{iv} \right).$$

Linearity of expectation provides an alternative formula for the total expected value:

$$\mathbb{E}[X] = \sum_{i=1}^{|V|}\mathbb{E}\left[ X_i \right]. $$

\subsection{Adaptation of the Genetic Algorithm}

As the MILP formulation was already slow for a single drone mission, it seems hard to generalize it to a practically relevant program for any number of drones. Therefore we immediately present a generalization of the genetic algorithm. As we have a formula for the expected value of a multi-drone strategy, we are able to rank the different strategies in the genetic algorithm. The initialization of the algorithm is very similar as before. At random, the required number of routes is created. These routes are again allowed to be very long, i.e $L_{\max}$ remains $n+100$ for a graph on $n$ vertices. The transmission strategy is determined by a local search algorithm on the total expected value with a randomly generated starting point as in Section \ref{section: genetic algorithm}. This means that we only flip one transmission at a time combined over the send strategies of all drones. Concerning the cross-over, the same method as for the single drone mission is kept. However, at first we randomly permute all routes of the drones in the mission to then pairwise cross them over to obtain a new multi drone strategy. 

Also the mutations have the same flavour as in the single drone case. First of all, there is the \textit{added random walk}. This is the same mutation as described for the single drone: one of the routes is picked and a random walk is added somewhere in the walk. This continues to be a useful mutation to increase the variation in the genetic program. But it slows down the algorithm, because it requires a local search for the optimal send strategy in terms of the expected value for multiple drones. Therefore this mutation is only actualized with a low probability. If a generation consists of $\kappa$ strategies, the mutation probability is set at 2/$\kappa$. 

A second important remark that can be made is that because of the formulation of the expected value, the different drones are incentivized to still travel to all vertices of the graph and try to gather information everywhere. However, it does make sense that the drones fly more or less in `opposite directions' through the graph. In this case, more vertices have a high probability of their information being successfully transmitted by at least one of the drones. This inspired the \textit{reversed} mutation. In this mutation, exactly one of the routes is completely reversed. The same holds for the corresponding send strategy. This mutation is computationally cheaper than the other one so this is actualized with 20\% probability.

\subsection{Performance of the Multi-Drone Genetic Algorithm}

After having adapted the genetic algorithm to the multiple drone scenario, we have tested this algorithm on multiple graphs using two drones. On a graph with six vertices and a sparse graph on ten vertices, the genetic algorithm remained fast and reliable. For example for the complete graph on 6 vertices given by the matrix

$$\begin{bmatrix}
1&0.97&0.81&0.97&0.95&0.96\\
0.97&0.93&0.92&0.87&0.89&0.87\\
0.81&0.92&0.96&0.81&0.98&0.93\\
0.97&0.87&0.81&0.85&0.93&0.93\\
0.95&0.89&0.98&0.93&0.91&0.89\\
0.96&0.87&0.93&0.93&0.89&0.90
\end{bmatrix},$$

\noindent
the returned solution is

\begin{center}
\begin{tabular}{c c c}
\multirow{2}{4em}{Drone 1} & Route: & $(0,3,0,5,0,4,2,1,0)$ \\
& Send: & $(0,0,1,0,1,0,1,0,1)$ \\
\\
\multirow{2}{4em}{Drone 2} & Route: & $(0,1,0,4,2,5,0,3,0)$ \\
& Send: & $(0,0,1,0,1,0,1,0,1)$\\
\end{tabular}
\end{center}

\noindent
and increased the expected value from 4.115090 for a single drone to 4.859738 for two drones out of 5 possible units of information. Note that indeed, the structure of the two routes is approximately opposite. But on the example of $K_{10}$, the current algorithm is not sufficient. The algorithm becomes too slow for practical use and does not consistently return the same optimal strategy. The best strategy so far is given by 
\begin{center}
\begin{tabular}{c c c}
\multirow{2}{4em}{Drone 1} & Route: & $(0,5,2,6,7,8,7,6,2,5,9,3,1,0,4,0)$\\
& Send: & $(0,0,0,0,0,0,1,0,0,0,0,1,0,1,0,1)$\\
\\
\multirow{2}{4em}{Drone 2} & Route: & $(0,4,0,1,3,9,5,2,6,7,8,7,6,2,4,0)$ \\
& Send: & $(0,0,1,0,1,0,0,0,0,1,0,1,0,0,0,1)$ \\
\end{tabular}
\end{center}
with expected value 8.653276, but was found in less than 10\% of the runs. The best solution for one drone had expected value 7.305181, showing that it does pay off for larger missions to use multiple drones, even if that strategy is not necessarily optimal. However, as a further research, it would be interesting to further improve the used methods to solve this problem for even larger instances and find the optimal solution more easily.

\bibliographystyle{plain}
\bibliography{sources}

\end{document}